\numberwithin{equation}{section}
\newtheorem{Thm}{Theorem}[section]
\newtheorem{Prop}{Proposition}[section]
\newtheorem{Asmpt}{Assumption}[section]
\begin{document}
\title{Mean-Field Game for Gene Expression of Beetles}
\keywords{Mean field game; Gene Expression of Beetles; Existence and uniqueness of solutions.}
\author{Yiming Jiang}
\address{School of Mathematical Sciences and LPMC\\ Nankai University\\ Tianjin 300071 China}
\email{ymjiangnk@nankai.edu.cn}
\author{Yuan Lou}
\address{School of Mathematical Sciences\\ Shanghai Jiao Tong University\\ Shanghai 200240 China}
\email{yuanlou@sjtu.edu.cn}
\author{Yawei Wei}
\address{School of Mathematical Sciences and LPMC\\ Nankai University\\ Tianjin 300071 China}
\email{weiyawei@nankai.edu.cn}
\author{Fei Zeng}
\address{School of Mathematical Sciences \\ Nankai University\\ Tianjin 300071 China}
\email{2120230039@mail.nankai.edu.cn}
\author{Zelin Zhang}
\address{School of Mathematical Sciences \\ Nankai University\\ Tianjin 300071 China}
\email{2120240104@mail.nankai.edu.cn}
	
\begin{abstract}
In this paper, we investigate the probability of the expression of genes that control the size of beetles under competitive relationships. We use the mean field game (MFG) theory in multiple populations to characterize the different competitive pressures of large and small beetles in the population, and simulate the probability of gene expression in finite time $[0, T]$. Therefore, we prove the existence and uniqueness of the solution of the equation under some assumptions. 
\end{abstract}
\maketitle
\section{Introduction}

The probability of gene expression in a population is a perennial topic, and when a gene can control a clear characteristic of an organism, it will have a certain impact on the competitive pressure of the organism. Being able to express characteristics that are suitable for the environment first is the key to biological evolution, and judging the tendency of gene expression can help discover the direction of biological evolution.

We referred to (\cite{bt2}, \cite{bt3}, \cite{bt4}) for research on insect habits. In this article, insects, as omnivorous animals, actively pursue prey, obtain resources, and compete with the entire population. Assuming a beetle wants to monopolize the branches and leaves of a plant, and usually needs to compete with other individuals in the population, it may only obtain one-third or even less of it. We record the resources that insects want as their decisions, and the actual resources that insects obtain are smaller than their decisions, depending on the competitive pressure of the population, which is related to the probability of gene expression.

Suppose that there is a gene in the beetles that controls size, which is expressed to make the beetle larger, otherwise smaller. As beetles need to compete with populations in order to obtain resources, this competitive relationship is related to the size of the beetles. Therefore, we divided the beetles into two different populations of different sizes(large and small). Using the MFG method to simulate competition between beetle populations of different sizes, in order to solve the probability of the expression of this gene.

In this article, we consider a multipopulation MFG model that incorporates an unknown function $p(t)$ to couple the equations of two populations. The model is as follows: 
\begin{equation}\label{model}
	\left\{
	\begin{array}{lr}
		-\partial_{t} u_{k} + H_{k}( x , p , \partial_{x} u_{k} ) = \partial_{xx} u_k \\
		\partial_{t} m_k - div_x (m_k D_h H_k)  = \partial_{xx} m_k \\
        \int_{\Omega} D_h H_1 \text{d} m_1 + \int_{\Omega} D_h H_2 \text{d} m_2 = -Q(t) \\
        u_k (T,x) = \overline{u}_k (x), \quad m_k (0,x) = \overline{m}_k (x), 
		
	\end{array}\right.
\end{equation} for $k=1,2$. 

In the previous equation, $x(t)\in \mathbb{R}$ is the state of each beetle at time $t$. The function $u_k(x,t)$ is the value function for a beetle whose resource is $x$ at time $t$. The rate $\alpha$ as the beetles choose to acquire resources. As each beetle should compete with all beetles in the population and how tough competition is in the population is associated with $p$, we use $f_k (p, \alpha)$ to express the resources each beetle chooses to obtain in competition. In addition, beetles should pay some energy $c_0 (\alpha, t)$ to search for resources, since the resources naturally faded as $l(x)$. Let $k =1,2$ to indicate small and larch beetle respectively, we express the rate $c_k (\alpha, p ,x, t)$ each beetle gains resources at time $t$ is
$$ c_k (\alpha, p, x, t) = -c_0 (\alpha, t) + f_k (p, \alpha) - l(x).$$ 
Note that $\alpha \geq 0$ since beetles gain nothing when positively losing resources. The main result is as follows: 
\begin{Thm}
If the function $u_1(x, T), u_2(x, T)\in C^1([0,T], R)$, and $ m_1(x, 0)$, $m_2(x, 0)\in C^1([0,T], \mathbb{P}), Q(t) \in C^1([0,T], R)$ and satisfy Assumption 2.1 and 2.2, then there exists a unique probability function $p(t) \in [0,T] \times [0,1]$ such that the quintuple $(u_1, u_2, m_1, m_2, p)$ is a unique solution of the model
\begin{equation}
	\left\{
	\begin{array}{lr}
		-\partial_{t} u_{1} + H_{1}( x , p , \partial_{x} u_{1} ) = \partial_{xx} u_1 \\
		\partial_{t} m_1 - div_x (m_1 D_h H_1)  = \partial_{xx} m_1 \\
        -\partial_{t} u_{2} + H_{2}( x , p , \partial_{x} u_{2} ) = \partial_{xx} u_2 \\
		\partial_{t} m_2 - div_x (m_2 D_h H_2)  = \partial_{xx} m_2 \\
        \int_{\Omega} D_h H_1 \text{d} m_1 + \int_{\Omega} D_h H_2 \text{d} m_2 = -Q(t) \\
        u_k (T,x) = \overline{u}_k (x), \quad m_k (0,x) = \overline{m}_k (x), 
		
	\end{array}\right.
\end{equation} for $k=1,2$. 
\end{Thm}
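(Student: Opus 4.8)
\medskip
\noindent\textbf{Proof strategy.}
The plan is to view $p(\cdot)$ as a market-clearing ``price'' that enforces the resource constraint, and to collapse the whole quintuple onto a fixed-point problem for the single scalar function $t\mapsto p(t)$. The simplification I would exploit at the outset is that, with $p$ frozen, the two subsystems decouple and are moreover triangular: the Hamilton--Jacobi equation for $u_k$ does not see $m_k$.

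Fix $p\in C([0,T];[0,1])$. Under the regularity and growth hypotheses collected in Assumption 2.1, the backward equation $-\partial_t u_k+H_k(x,p,\partial_x u_k)=\partial_{xx}u_k$ with $u_k(T,\cdot)=\overline u_k$ is a uniformly parabolic semilinear problem; after the standard Bernstein-type gradient bound controlling $H_k(x,p,\partial_x u_k)$ one obtains from Schauder theory a unique classical solution $u_k=u_k[p]$, and since $p$ enters only as a bounded coefficient, the H\"older bounds on $u_k$, $\partial_x u_k$, $\partial_{xx}u_k$, $\partial_t u_k$ are uniform in $p$. Then $b_k[p]:=D_h H_k(x,p,\partial_x u_k[p])$ is a fixed H\"older field, and the forward equation $\partial_t m_k-\mathrm{div}_x(m_k\,b_k[p])=\partial_{xx}m_k$, $m_k(0,\cdot)=\overline m_k$, is linear parabolic, with a unique nonnegative mass-preserving solution $m_k=m_k[p]$ enjoying H\"older bounds uniform in $p$. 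A routine stability estimate then shows $p\mapsto(u_1[p],u_2[p],m_1[p],m_2[p])$ is continuous in the relevant norms.

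Now introduce the excess-demand function
\[
\Gamma(t,q;p)\;:=\;\sum_{k=1}^{2}\int_{\Omega}D_h H_k\bigl(x,q,\partial_x u_k[p](t,x)\bigr)\,m_k[p](t,x)\,\mathrm{d}x .
\]
The monotone dependence of the competition response $f_k$ on $p$ built into Assumption 2.1 makes $q\mapsto\Gamma(t,q;p)$ strictly monotone, and Assumption 2.2 should be exactly the range condition ensuring that $-Q(t)$ lies strictly between $\Gamma(t,0;p)$ and $\Gamma(t,1;p)$ for every admissible $p$; hence there is a unique $q=\Psi(p)(t)\in[0,1]$ solving $\Gamma(t,\Psi(p)(t);p)=-Q(t)$. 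Because $Q\in C^{1}$ and the data are $C^{1}$, the implicit function theorem gives $\Psi(p)\in C^{1}([0,T];[0,1])$ with a Lipschitz-in-$t$ bound independent of $p$, so $\Psi$ maps the convex compact set $\mathcal K:=\{p\in C([0,T];[0,1]):\mathrm{Lip}(p)\le L\}$ into itself for a suitable $L$, and is continuous there by the previous paragraph. Schauder's fixed-point theorem produces $p\in\mathcal K$ with $\Psi(p)=p$, and then $(u_1[p],u_2[p],m_1[p],m_2[p],p)$ is a solution.

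For uniqueness I would run a Lasry--Lions duality argument: given two solutions, test the difference of the HJB equations against $m_k-\hat m_k$, test the difference of the Fokker--Planck equations against $u_k-\hat u_k$, integrate by parts over $\Omega\times[0,T]$ and sum over $k$. The terminal and initial data cancel, convexity of $H_k$ in the gradient slot contributes a sign-definite bulk term, and the remaining indefinite cross-term $\sum_k\iint(p-\hat p)\bigl(D_h H_k\,\mathrm{d}m_k-D_h H_k\,\mathrm{d}\hat m_k\bigr)$ is eliminated using the resource constraint, which forces the relevant $t$-integrals to reduce to $\int_0^T(p-\hat p)\bigl(-Q(t)+Q(t)\bigr)\,\mathrm{d}t=0$, together with the monotone dependence of $D_h H_k$ on $p$. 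This forces $\partial_x(u_k-\hat u_k)\equiv0$, hence $p\equiv\hat p$, and the triangular uniqueness of the first step then closes the loop. The step I expect to be the real obstacle is the global coupling: since the Hamilton--Jacobi equation runs backward while the Fokker--Planck runs forward, $\Gamma(t,\cdot\,;p)$ depends on $p$ over the \emph{whole} interval, so $p$ cannot be recovered pointwise in $t$ and one genuinely needs the uniform-in-$p$ parabolic estimates and the compactness of $\mathcal K$ for the Schauder step; in the uniqueness argument the delicate point is checking that a single scalar constraint per time dominates the difference of \emph{both} population pairs simultaneously, which is precisely where the joint sign structure of $f_1$ and $f_2$ in Assumption 2.1 enters.
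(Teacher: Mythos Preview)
Your existence scheme parallels the paper's in freezing $p$, solving HJB then FP, and applying Schauder, but the resolution of the constraint is different. You propose to invert $\Gamma(t,q;p)=-Q(t)$ in $q$ pointwise via the implicit function theorem, relying on a range condition you attribute to ``Assumption~2.2''. That assumption in the paper is in fact the semiconcavity of $\bar u_k$ and $l$; no range condition on $Q$ appears anywhere. The paper instead \emph{differentiates the constraint in $t$}, substitutes $\partial_t m_k$ from the Fokker--Planck equation, and arrives at an explicit ODE
\[
\dot\theta \;=\; \frac{-\dot Q(t)+\sum_i\int_{\mathbb R} D^2_{hx}H_i\bigl(2D_hH_i\,m_i+\partial_x m_i\bigr)\,dx}{\sum_i\int_{\mathbb R} D^2_{hp}H_i\,m_i\,dx},\qquad \theta(0)=p(0),
\]
with $u_i=u_i[p]$, $m_i=m_i[p]$ on the right; the Schauder map is $p\mapsto\theta$. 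This bypasses any range hypothesis on $Q$ and delivers the equicontinuity needed for compactness directly from the boundedness of the right-hand side.

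The genuine gap is in your uniqueness argument. You rely on ``convexity of $H_k$ in the gradient slot'' to produce a sign-definite bulk term in the Lasry--Lions pairing. But in this model $H_k(x,p,h)=-(1-h)\bigl(F_k(p)-l(x)\bigr)$ is \emph{affine} in $h$, so $D^2_{hh}H_k\equiv 0$ and that term vanishes identically; the duality computation cannot force $\partial_x(u_k-\hat u_k)=0$. The paper carries out the same pairing and, after using the constraint to cancel the $D_hH_k$ piece, is left with $(p-p')$-weighted $D_pH_k$ and $(p-p')^2$-weighted $D^2_{pp}H_k$ terms which it does \emph{not} sign; it only records that the entire expression collapses to zero when $p=p'$. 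Uniqueness is then obtained not from the pairing but by returning to the ODE above and showing its right-hand side is Lipschitz in $p$ (using that $D^2_{hx}H_i$ is a constant from the affine $l$, the Lipschitz bound on $F_k$, and Assumption 2.1(4)), so the ODE has a unique solution; once $p=\hat p$, the triangular HJB/FP uniqueness you already noted finishes the argument. Without this reduction-to-ODE step your proof does not close.
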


In this model, the Hamiltonian we are studying has three variables: $x$, $p$, and $\partial_x u_k$. This is different from the classical MFG model. In Section 2, we will explain this Hamiltonian by deducing the model. 

Mean field game theory which is devoted to solve optimal control problems with large number of rational players has been developed by Lasry and Lions in series of papers (\cite{LL1}, \cite{LL2}, \cite{LL3}, \cite{LL4}).
Energy formation models are sort of price formation models using MFG theory in \cite{LL3}. This type of research has been advanced by many different researchers (\cite{pr1}, \cite{pr3}). In this article, we assume price is a given function of gene expression probability, and decisions making simultaneously affects prices and demand.

In mathematical physics, research for the behavior of a large number of identical particles has been developed in \cite{psy1}, \cite{psy2}. 
Related ideas has been developed independently at same time in series of papers by Huang-Caines-Malhame(\cite{HMC1}, \cite{HMC2}, \cite{HMC3}, \cite{HMC4}). 
In case of application, study the numerical approximation of the solution of MFG models: see Achdou and Capuzzo Dolcetta \cite{A1}, Achdou, Camilli and Capuzzo Dolcetta \cite{A2}. 
The mean field games theory seem also paticularly adapted to modelize problems in economics: see Gu´eant \cite{eco1}, \cite{eco2}.  

In this article, our main achievement is to complete the proof of the existence and uniqueness of the solution for this model. At the same time, we provide a specific example of this equation and subsequently perform numerical simulations on it, presenting some specific function graphs to demonstrate the practical application value of our model.

Remind that $x(t)$ is a numerical value used to express the true ability of beetles to obtain resources. By incorporating natural noise, we have
$$ \text{d} x(t) = c_k (t) \text{d} t + \sqrt{2} \text{d} W(t),$$ 
the $W(t)$ is standard Brownian motion in possibility space $(\Omega_p, \mathcal{F}, P)$. 

Next, we consider the cost $c_k$ that beetles need to spend on making decisions at $(x, t)$, as well as the cost function $J_k$ from time $t$ to $T$ $(t < T)$. $k = 1, 2$ represents two different populations of beetles: large beetles and small beetles. The cost function $J_k$ is 
$$ J_k (\alpha, p, x, t) = \int_t^T -c_k(\alpha, p, x, s) \text{d}s - \int_t^T \sqrt{2} \text{d}W(s) + \overline{u_k}(x), $$   
where $J_k(\alpha, p, x, t)$ describes the cost that a single beetle needs to spend from time $t$ to $T$, and $\overline{u_k}(x)$ is the boundary condition.
The value function 
$$ u_k(x,t) = \min\limits_\alpha \mathbb{E} J_k (\alpha, p, x, t) ,$$ 
For any $t<T$ and $\delta>0$ small enough, we have 
$$ u_k(x(t),t) = \min\limits_\alpha [ \int_t^{t+\delta t} -c_k(s) \text{d}s + u_k(x(t+\delta t), t+\delta t)].$$
Calculate, we have
\begin{equation}\label{u_1}
    \partial_t u_k + \min\limits_\alpha [-c_k + \partial_x u_k c_k] = -\partial_{xx} u_k.
\end{equation}
For the Hamiltonian, 
\begin{align}
H_k \nonumber & = -\sup\limits_\alpha (c_k - \partial_x u_k c_k) \\
    \nonumber & = -\sup\limits_\alpha [ (1 - \partial_x u_k) (-c_0(\alpha , t) + f_k(p , \alpha) - l(x))] \\
    \nonumber & = -(1-\partial_x u_k) (F_k(p)- l(x)).
\end{align}
So we note Hamiltonian as $H_k(x, p, \partial_x u_k)$, and \eqref{u_1} tends to
$$-\partial_t u_k + H_k(x, p, \partial_x u_k) = \partial_{xx} u_k.$$

As a comparison, for the conventional Hamiltonian in MFG models, 
$$H(x,p) = \sup\limits_{\alpha \in A} [ r(x, \alpha) + f(x, \alpha) p], $$ 
where $A$ is the set of decision. That is to say, in this model, the condition we studying is $r(x, \alpha)=f(x, \alpha)=c_k(x, \alpha, p)$, and the $p$ in this model is the $\partial_x u_k$ in ours. Since we want to obtain the function $p(t)$, we have to represent $p$ in the model. From here, we can see the difference in our equations, which is also a research difficulty: the variable $p (t)$ we want to find is in the running cost $r(x, \alpha)$. Therefore, it is a difficult proposition to separate and solve $p$, which we will address in Section 3.1. 

For the Fokker-Planck equation, the optimal is $c_k^* = - D_h H_k(x,p,h)$, 
$$ \partial_{t} m_k - div_x (m_k D_h H_k)  = \partial_{xx} m_k.$$ 
The resource changing of two populations at time t is 
$$\int_{\Omega} D_h H_1 \text{d} m_1 + \int_{\Omega} D_h H_2 \text{d} m_2 = -Q(t).$$
The above constitutes equation \eqref{model}. 

For the Hamiltonian, we need to control the increase of $F(p)$. And since $l(x)$ represents the natural decomposition rate of resources, we assumpt $l(x)$ is some increase linear function of $x$, with coefficient less than 1 and not too low. So we assumptions are: 
\begin{Asmpt}
The Assumptions we need are as follows:

    \text{1).}  The Hamiltonian $H_k$ is 
    $$ H_k(x, p, h) = -\sup\limits_\alpha \big[ (1 - h) (-c_0(\alpha , t) + f_k(p , \alpha) - l(x)) \big].$$
    where $f_k \in C^2(\mathbb{R}^+ \times [0,T])$, $c_0 \in C^2(\mathbb{R}^+ \times [0,T])$ and $l(x) \in C^2(\mathbb{R})$. $p \rightarrow H_k$ is Lipschitz by some constant $C > 0$, and $l(x) = a_0x + a_1$ where $\frac{1}{2} < a_0 < 1$.
    
    \text{2).} The terminal condition $\overline{u}_k (x) $ and the rate function $ l(x) $ are semiconcave.

    \text{3).} The terminal condition $\overline{u}_k (x) $ and the rate function $ l(x) $ is Lipschitz under constant $1-\delta$ for some $0<\delta<1$.

    \text{4).} Suppose that $D_{pp}^2 H_k \leq 0$, $D_p H_k(p,h) \leq D_{pp}^2 H_k(p,h)$ and $D_p H_k \leq 2H_k$. 
\end{Asmpt}

    Noted that $$(1-h)D_h H_k = -H_k,$$ and $D_x H_k$ is not related to $p$, $D_h H_k$ is not related to $h$. What's more, $D_{hx}^2 H_k$ is a positive constant. 

    Since $p \rightarrow H_k$ is Lipschitz, it implies that $p \rightarrow F_k(p)$ and $p \rightarrow D_h H_k(p)$ is Lipschitz. 

To obtain the fixed point, we need condition of rate function $l(x)$ and the terminal condition $\overline{u}_k (x)$. 

Then, to obtain the conclusion of uniqueness, we need the following. Since the natural decline rate of resources is slower than the growth of resources.

Finally, Since large beetles have more affection in the environment, cost function changes intensely at the beginning of $p$ from $0$ to $1$. Thus we assumpt $H$ is concave about $p$, and we need to control the increase of $H_k$.

\section{Main Results}

This paper explains the model in chapter 1, as well as some assumptions needed to prove the existence and uniqueness, and explains the rationality of the assumptions. The first half of chapter 2 proves the existence of the solution, and the second half proves the uniqueness of the solution. The solution of the MFG equation, which is a simulated value function, has been discussed in relevant articles: see \cite{pr1}, \cite{pr2}, \cite{pr3}. However, because of the Hamiltonian we focusing in this paper is different from the classical MFG equation, which also leads to computational complexity, especially in terms of uniqueness. Chapter 3 explains a specific function as a Hamiltonian and proves that it satisfies the assumptions which tends to the existence and uniqueness of solution in this case. Chapter 4 is a summary of the paper. 

In this chapter, our main goal is to solve the problem of the existence and uniqueness of the model \ref{model}. In Section 2.1, 
We will use Schauder's fixed point theorem to solve the problem of the existence of model solutions. For this, we need to: 

1. Separate the probability function $p (t)$ from the equation and replace it with $\theta$; 

2. Prove the continuity of $\theta \rightarrow p$ mapping. 

In Section 2.2, we will provide a conclusion on the uniqueness of the solution by calculating the monotonicity of the operator.

\subsection{Existence of a Solution}
We know that the MFG equation is derived from the coupling of the Hamilton-Jacobi-Bellman equation and the Fokker-Planck equation. In this model, The Hamilton-Jacobi-Bellman equation
\begin{equation}\label{HJB}
	\left\{
	\begin{array}{lr}
		-\partial_{t} u_{k} + H_{k}( x , p , \partial_{x} u_{k} ) = \partial_{xx} u_k \\
        u_k (T,x) = \overline{u}_k (x),
		
	\end{array}\right.
\end{equation} for $k=1,2$,
and the Fokker-Planck equation
\begin{equation}\label{FP}
	\left\{
	\begin{array}{lr}
		\partial_{t} m_k - div_x (m_k D_h H_k)  = \partial_{xx} m_k \\
        \quad m_k (0,x) = \overline{m}_k (x), 
		
	\end{array}\right.
\end{equation} for $k=1,2$.
If we fix a $p(t): [0,T]\rightarrow [0,1]$ , by general stochastic optimal control theory, there is a unique viscosity solution $u_k(x, t)$ of the HJB equation \eqref{HJB}. And the Fokker-Planck equation has a unique solution $m(t,x) = \mathcal{L}(x)$. First, we need to prove that $p\to u_i$, $i=1,2$ are continue.
\begin{Prop}
    If Assumption 2.1(2) holds, then $x\to u_k(x,t)$ is semiconcave and the semiconcave constant is independent of $p$ for $k=1,2$. 
\end{Prop}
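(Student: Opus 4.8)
The plan is to work from the stochastic representation of $u_k$ set up in Section~1, namely $u_k(x,t)=\inf_\alpha \mathbb{E}\,J_k(\alpha,p,x,t)$, and to exploit the affine structure of the dynamics forced by Assumption 2.1(1). Write the controlled state as $X^{t,x,\alpha}_s$, the solution of $dX_s=\bigl(-c_0(\alpha_s,s)+f_k(p,\alpha_s)-l(X_s)\bigr)ds+\sqrt2\,dW_s$ with $X_t=x$. Since $l(x)=a_0x+a_1$, the drift is affine in the state with $x$-coefficient $-a_0$ that is independent of $\alpha$ and of $p$; hence, for a fixed progressively-measurable (open-loop) control $\alpha$, the map $x\mapsto X^{t,x,\alpha}_s$ is affine, and explicitly $X^{t,x+y,\alpha}_s-X^{t,x,\alpha}_s=e^{-a_0(s-t)}y$, a deterministic quantity. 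In particular the second-order difference of trajectories vanishes identically: $X^{t,x+y,\alpha}_s+X^{t,x-y,\alpha}_s-2X^{t,x,\alpha}_s=0$ for every $s\in[t,T]$.

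Next I fix $(x,t)$ and $y$, pick $\varepsilon>0$, and choose an $\varepsilon$-optimal \emph{open-loop} control $\alpha^\varepsilon$ for the problem started at $(x,t)$, so that $\mathbb{E}\,J_k(\alpha^\varepsilon,p,x,t)\le u_k(x,t)+\varepsilon$. Using this same control $\alpha^\varepsilon$ for the problems started at $(x\pm y,t)$ (which is legitimate precisely because $\alpha^\varepsilon$ is open-loop, not feedback) gives $u_k(x\pm y,t)\le \mathbb{E}\,J_k(\alpha^\varepsilon,p,x\pm y,t)$, whence
\[
u_k(x+y,t)+u_k(x-y,t)-2u_k(x,t)\le \mathbb{E}\Bigl[J_k(\alpha^\varepsilon,p,x+y,t)+J_k(\alpha^\varepsilon,p,x-y,t)-2J_k(\alpha^\varepsilon,p,x,t)\Bigr]+2\varepsilon .
\]
In this difference the running-cost contributions coming from $c_0$ and $f_k$, as well as the stochastic integral $\int_t^T\sqrt2\,dW_s$, depend neither on the starting point nor on the trajectory, so they cancel; what remains is $\int_t^T\bigl[l(X^+_s)+l(X^-_s)-2l(X_s)\bigr]ds+\bigl[\overline u_k(X^+_T)+\overline u_k(X^-_T)-2\overline u_k(X_T)\bigr]$, where $X^\pm,X$ are the three trajectories driven by $\alpha^\varepsilon$. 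Because $l$ is affine the first bracket equals $a_0\bigl(X^+_s+X^-_s-2X_s\bigr)=0$; and since $X^\pm_T=X_T\pm e^{-a_0(T-t)}y$, the terminal term is controlled by the semiconcavity of $\overline u_k$ from Assumption 2.1(2): if $\overline u_k$ is semiconcave with constant $C_0$, then $\overline u_k(X^+_T)+\overline u_k(X^-_T)-2\overline u_k(X_T)\le C_0\,e^{-2a_0(T-t)}|y|^2\le C_0|y|^2$, using $a_0>0$ and $T-t\ge0$.

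Combining these and letting $\varepsilon\downarrow0$ yields $u_k(x+y,t)+u_k(x-y,t)-2u_k(x,t)\le C_0|y|^2$ for all $x,y$, which is exactly semiconcavity of $x\mapsto u_k(x,t)$ with constant $C_0$; and $C_0$ depends only on the terminal datum $\overline u_k$ and on $a_0$, hence is independent of $p$. An equivalent route that avoids the stochastic-control formalism is to differentiate the equation \eqref{HJB} twice in $x$ and run a maximum-principle / doubling-of-variables argument on the second difference quotient $u_k(x+y,t)+u_k(x-y,t)-2u_k(x,t)$; the linearity of $l$ again forces the $H_k$-terms to drop out, leaving only the terminal comparison with $C_0|y|^2$.

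I expect the only genuinely delicate point to be the transplantation step: one must take an $\varepsilon$-optimal \emph{open-loop} control rather than the optimal feedback, so that the same control process can be used from all three initial states and the trajectories differ only through the affine dependence on the initial condition. Everything else is dictated by Assumption 2.1(1)–(2): affineness of $l$ annihilates both the running-cost second difference and the trajectory second difference, while semiconcavity of $\overline u_k$ together with the contraction factor $e^{-a_0(T-t)}\le1$ supplies the bound with a $p$-free constant.
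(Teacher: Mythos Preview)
Your proof is correct and follows the same transplantation strategy as the paper: take a (near-)optimal control for the problem started at $x$, reuse it at $x\pm y$, and bound the second difference of the cost by the semiconcavity of $l$ and $\overline u_k$. Your treatment is in fact more careful than the paper's --- you track the affine trajectory dependence $X^{t,x+y}_s-X^{t,x}_s=e^{-a_0(s-t)}y$ and work with an $\varepsilon$-optimal open-loop control rather than assuming an optimizer --- but the underlying idea is identical.
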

\begin{proof}
Expanding $u_k(x,t)$, we have 
\begin{align}
\nonumber    u_k(x,t) & = \min\limits_\alpha \mathbb{E} J_k (\alpha, p, x, t) \\
   & = \min\limits_\alpha [\int_t^T c_0 (\alpha, s) - f_k (p, \alpha) + l(x) \text{d}s + \overline{u_k}(x)]  \\
\nonumber    & = \int_t^T c_0 (\alpha^*, s) - f_k (p, \alpha^*) + l(x) \text{d}s + \overline{u_k}(x),
\end{align}
where $\alpha^*$ is the optimal control in $(x,t)$.
For any $h > 0$, we have 
$$u_k(x\pm h,t) \leq \int_t^T c_0 (\alpha^*, s) - f_k (p, \alpha^*) + l(x\pm h) \text{d}s + \overline{u_k}(x\pm h).$$
As Assumption 2.2, both $\overline{u}_k (x)$ and $ l(x) $ are semiconcave. Then there is a constant $C$ such that
$$ \overline{u_k}(x + h) + \overline{u_k}(x - h) - 2\overline{u_k}(x) \leq Ch^2, $$
$$ l(x + h) + l(x - h) - 2l(x) \leq Ch^2, $$
$$ u_k(x + h) + u_k(x - h) - 2u_k(x) \leq Ch^2. $$
So $u_k$ is semiconcave. 
\end{proof}

\begin{Prop}
    If any given $p(t)$, $H_k$ satisfies the Assumption 2.1(1), then equation \eqref{HJB} has a unique viscosity solution $u$. Moreover, if Assumption 2.1-2.2 holds and $p_n$ uniformly converges to $p$, then $u_n$ uniformly converges to $u$, and $\partial_x u_k^n$ converges to $\partial_x u_k$ almost everywhere.
\end{Prop}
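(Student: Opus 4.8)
The plan is to treat the two assertions in the statement separately. For the first, fix a continuous $p:[0,T]\to[0,1]$. By Assumption 2.1(1) the Hamiltonian has the explicit form $H_k(x,p,h)=-(1-h)\big(F_k(p)-l(x)\big)$ with $F_k$ built from $C^2$ data and $l(x)=a_0x+a_1$, so $(x,t,h)\mapsto H_k(x,p(t),h)$ is continuous and affine — hence convex — in $h$. Since \eqref{HJB} is uniformly parabolic (the term $\partial_{xx}u_k$ is nondegenerate), a comparison principle holds in the class of functions that are Lipschitz in $x$ uniformly in $t$, which is the natural class here because of the slope bound in Assumption 2.1(3). Existence then follows either from Perron's method or, more directly, from the verification theorem applied to the stochastic control problem $u_k(x,t)=\min_\alpha\mathbb{E}\,J_k(\alpha,p,x,t)$ of Section 1, whose value function is the unique viscosity solution; uniqueness is exactly the comparison principle.

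For the stability statement, let $p_n\to p$ uniformly on $[0,T]$ and let $u_k^n,u_k$ be the corresponding solutions. By Assumption 2.1(1) the map $p\mapsto F_k(p)$ is Lipschitz with constant $C$, and by Assumption 2.1(3) the relevant slopes $h=\partial_x u_k^n$ stay in $[-(1-\delta),1-\delta]$, so
\[
\big\|H_k(\cdot,p_n,\cdot)-H_k(\cdot,p,\cdot)\big\|_{\infty}\le(2-\delta)\,C\,\|p_n-p\|_{L^\infty([0,T])}=:\varepsilon_n .
\]
The functions $u_k\pm\varepsilon_n(T-t)$ are then respectively a super- and a subsolution of the equation solved by $u_k^n$ and agree with it at $t=T$, so the comparison principle yields $\|u_k^n-u_k\|_{\infty}\le\varepsilon_n T\to0$, i.e. uniform convergence with an explicit rate. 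Equivalently, one may pass to the half-relaxed upper and lower limits of $(u_k^n)$, use stability of viscosity sub/supersolutions under the uniform convergence of the Hamiltonians, and invoke the comparison principle from the first step to squeeze them together.

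For the almost everywhere convergence of the gradients I would use semiconcavity. By Proposition 2.2 each $u_k^n(\cdot,t)$ is semiconcave with a constant $C$ independent of $p$, hence independent of $n$, and by Assumption 2.1(3) the family is uniformly Lipschitz in $x$; the uniform limit $u_k(\cdot,t)$ inherits both properties and is therefore differentiable at a.e. $x_0$ (Alexandrov's theorem). At such an $x_0$, semiconcavity gives $u_k^n(x_0+h,t)-u_k^n(x_0,t)-\partial_x u_k^n(x_0,t)\,h\le\frac{C}{2}h^2$ for all $h$; the bounded sequence $\partial_x u_k^n(x_0,t)$ has along each subsequence a limit $q$, and passing to the limit (using the uniform convergence $u_k^n\to u_k$) gives $u_k(x_0+h,t)-u_k(x_0,t)-qh\le\frac{C}{2}h^2$, whence $q=\partial_x u_k(x_0,t)$ after letting $h\to0^\pm$, so the whole sequence converges a.e. The delicate point of the whole argument is precisely that the semiconcavity constant in Proposition 2.2 is genuinely uniform in $p$ (hence in $n$), together with the accompanying uniform Lipschitz bound — once these are secured, Step 3 is the standard semiconcave-functions toolkit (cf. Cannarsa--Sinestrari); a secondary technical issue is keeping Steps 1–2 inside the correct growth class of $x$-Lipschitz functions with constant $1-\delta$, dictated by the linear term $l(x)=a_0x+a_1$, so that the comparison principle applies despite $l$ and $H_k$ being unbounded in $x$.
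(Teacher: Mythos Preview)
Your proposal is correct and follows the same overall architecture as the paper: invoke standard viscosity-solution theory for existence, uniqueness, and stability under uniform perturbation of the Hamiltonian, and then use the semiconcavity of $u_k$ with constant independent of $p$ (the preceding proposition) to upgrade uniform convergence of $u_k^n$ to a.e.\ convergence of $\partial_x u_k^n$. The only differences are in execution: the paper merely cites ``the property of viscosity solutions'' for $u_k^n\to u_k$, whereas you supply the explicit barrier $u_k\pm\varepsilon_n(T-t)$; and for the gradient step the paper bounds the second difference quotient $h^{-1}\big(u_k(x+h)-2u_k(x)+u_k(x-h)\big)$ directly from the semiconcavity inequality, while you argue via subsequential limits of $\partial_x u_k^n(x_0)$ landing in the superdifferential of $u_k$ at $x_0$---both are standard semiconcave-function devices.
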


\begin{proof}

    Remind that $u_n$ uniformly converges to $u$ by the property of viscosity solution. As $u_k$ is semiconcave, fix $x\in \mathbb{R}$, 
$$\left|\partial_x u_k^n(x) - \partial_x u_k (x)\right| = \left|\lim_{h \to 0}\dfrac{u_k^n(x+h)-u_k^n(x)-u_k(x)+u_k(x-h)}{h}\right|.$$ 
Since $u_n$ uniformly converges to $u$ , for any $\epsilon>0$, $|u_k^n(x) - u_k(x)|<\epsilon$ is true for $n$ large enough. Then take $\epsilon = \delta h^2$, we have 
\begin{align}
\nonumber    |\partial_x u_k^n(x) - \partial_x u_k (x)| &=\left|\lim_{h \to 0}\dfrac{u_k(x+h)-2u_k(x)+u_k(x-h)}{h}\right| \\
\nonumber    &= \lim_{h \to 0} \big(|Ch| + |2\delta h|\big), 
\end{align}
where $C$ is the semiconcavity constant of $u_k$. 

\end{proof}

Second, we need to prove that $p\to u_i$, $i=1,2$ are continue. 
\begin{Prop}
    For a given p, with Assumption 2.1(1) holds, then equation \eqref{FP} has a unique solution $m_k$ and satisfies 
    $$ d_1( m_k(t), m_k(t+a) ) \leq C \sqrt{a},$$
    where $d_1$ is the 1-Wasserstein distance, and C is independent of p. 
\end{Prop}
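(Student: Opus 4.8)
\emph{Proof strategy.} The plan is to represent the solution of the Fokker--Planck equation \eqref{FP} as the law of a diffusion process and then bound the displacement of that process over a time interval of length $a$. Fix $p(t)\in[0,1]$. By Assumption 2.1(1) and the Remark following it, the coefficient in \eqref{FP} is
$$b_k(x):=D_hH_k(x,p)=F_k(p)-l(x)=F_k(p)-a_0x-a_1,$$
which does not depend on $\partial_x u_k$ (since $D_hH_k$ is not related to $h$) and is affine in $x$; hence $b_k$ is globally Lipschitz in $x$ with linear growth, and, because $p\mapsto F_k(p)$ is Lipschitz on the compact set $[0,1]$, the relevant constants ($a_0$, $a_1$, $\sup_{p\in[0,1]}|F_k(p)|$) are independent of $p$. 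Therefore the stochastic differential equation
\begin{equation}\label{SDE-FP}
	\mathrm{d}X^k_t=-b_k(X^k_t)\,\mathrm{d}t+\sqrt 2\,\mathrm{d}W_t,\qquad X^k_0\sim\overline m_k,
\end{equation}
has a unique strong solution, and $m_k(t):=\mathrm{Law}(X^k_t)$ is the unique time-continuous probability-measure-valued solution of \eqref{FP}: its law solves the forward Kolmogorov equation, which is precisely \eqref{FP}, and uniqueness follows from well-posedness of the associated martingale problem (equivalently, uniqueness for the linear parabolic equation \eqref{FP} with smooth coefficient $b_k$).

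\emph{Moment bound and the H\"older-$\tfrac12$ estimate.} Applying It\^o's formula to $|X^k_t|^2$, taking expectations, using $|b_k(x)|\le a_0|x|+C$ and Gronwall's inequality gives
$$\sup_{t\in[0,T]}\mathbb{E}\,|X^k_t|^2\le\Big(\int_{\R}|x|^2\,\mathrm{d}\overline m_k(x)+CT\Big)e^{CT}=:M,$$
where $M$ depends only on $T$, $a_0$, $a_1$, $\sup_{p\in[0,1]}|F_k(p)|$ and the second moment of $\overline m_k$ — in particular not on $p$ and not on $u_k$; the destabilizing linear part $+a_0x$ of the drift $-b_k$ only costs the finite factor $e^{CT}$ on $[0,T]$. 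Then, using $(X^k_t,X^k_{t+a})$ as a coupling of $m_k(t)$ and $m_k(t+a)$ (equivalently, Kantorovich--Rubinstein duality), for $0\le t\le t+a\le T$,
\begin{align}
	\nonumber d_1\big(m_k(t),m_k(t+a)\big)
	&\le\mathbb{E}\,\big|X^k_{t+a}-X^k_t\big|
	\le\mathbb{E}\!\int_t^{t+a}\!\big|b_k(X^k_s)\big|\,\mathrm{d}s+\sqrt 2\,\mathbb{E}\,\big|W_{t+a}-W_t\big| \\
	\nonumber &\le\int_t^{t+a}\!\big(a_0\,\mathbb{E}|X^k_s|+C\big)\,\mathrm{d}s+\sqrt 2\,\sqrt a
	\le\big(a_0\sqrt M+C\big)a+\sqrt 2\,\sqrt a .
\end{align}
Since $a\le T$ implies $a\le\sqrt T\,\sqrt a$, the right-hand side is at most $C'\sqrt a$ with $C'=(a_0\sqrt M+C)\sqrt T+\sqrt 2$ depending only on the data and not on $p$, which is the asserted inequality.

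\emph{Main obstacle.} The step I expect to require the most care is the bookkeeping that makes $C'$ genuinely independent of $p$: one must track that the drift $D_hH_k$, through $F_k(p)$, stays bounded uniformly for $p\in[0,1]$ — this is exactly where the Lipschitz continuity of $p\mapsto H_k$ in Assumption 2.1(1) enters — and that none of the It\^o/Gronwall constants secretly depend on $p$. If one insisted on a purely PDE proof instead, the difficulty would shift: testing \eqref{FP} against a $1$-Lipschitz function $\phi$ produces the term $\int\partial_{xx}\phi\,m_k$, which is not controlled for merely Lipschitz $\phi$, so one would have to mollify $\phi$ and estimate the resulting error — an extra technical layer that the probabilistic representation above avoids.
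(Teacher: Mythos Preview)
Your argument is correct and follows essentially the same route as the paper: represent $m_k$ as the law of the diffusion solving the SDE with drift $-D_hH_k$, use Kantorovich--Rubinstein (equivalently, the coupling $(X^k_t,X^k_{t+a})$) to bound $d_1$ by $\mathbb{E}|X^k_{t+a}-X^k_t|$, and split this into the drift integral plus the Brownian increment $\sqrt{2a}$. The only difference is that the paper simply writes $\|c_k\|_\infty\,a$ for the drift contribution, whereas you --- correctly noting that $D_hH_k=F_k(p)-a_0x-a_1$ is affine and hence unbounded in $x$ --- insert an It\^o/Gronwall moment bound to control $\mathbb{E}|X^k_s|$ uniformly in $p$; this extra step is a genuine improvement in rigor within the same overall approach.
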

\begin{proof}
We have 
\begin{align}
\nonumber    d_1( m_k(t), m_k(t+a) ) &= \sup\{ \int_{\mathbb{R}} \phi(x)(m_k(t) - m_k(t+a))\text{d}x \} \\
\nonumber    &\leq \sup \{\mathbb{E} [\phi(x(t))-\phi(x(t+a))]\} \\
\nonumber    &\leq \mathbb{E} [|x(t) - x(t+a)|] \\
\nonumber    &\leq \mathbb{E} [\int_t^{t+a} c_k(s,x,\alpha)\text{d}s + \sqrt{2}|W(t) - W(t+a)|] \\
\nonumber    &\leq ||c_k||_{\infty}a + \sqrt{2a},
\end{align}
where $\phi(x)$ is 1-Lipschitz continues.
    
\end{proof}

\begin{Prop}
    If it is assumed that Assumption 2.1(1)-(2) holds and $p^n$ uniformly converges to $p$, corresponding to $u_k^n$ and $u_k$ being the solutions of equation \eqref{HJB} (for $k=1,2$), then $m_k^n$ converges to $m_k$ (for $k=1,2$).
\end{Prop}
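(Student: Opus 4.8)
The plan is to exploit the fact that, under Assumption 2.1(1) and the structural remarks following it, the Fokker--Planck equation \eqref{FP} is a \emph{linear} parabolic equation whose drift is $D_h H_k(x,p(t),\partial_x u_k(x,t)) = F_k(p(t)) - l(x)$: since $D_h H_k$ is not related to $h$, the drift does not in fact involve $\partial_x u_k$, and it depends on $p$ only through the Lipschitz map $p \mapsto F_k(p)$. So the assertion amounts to a stability statement for a linear Fokker--Planck equation under uniform perturbation of the affine-in-$x$ drift, and the goal is to prove $m_k^n \to m_k$ in $C([0,T],\mathcal P_1)$, where $\mathcal P_1$ carries the $1$-Wasserstein distance $d_1$.

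First I would run the stochastic-representation argument already used for Proposition 2.4. Let $x^n$ solve the mean-field SDE at the optimal control, $\mathrm{d}x^n = (F_k(p^n(t)) - l(x^n(t)))\,\mathrm{d}t + \sqrt2\,\mathrm{d}W(t)$ with $x^n(0)\sim\overline m_k$, driven by the \emph{same} Brownian motion $W$ and the \emph{same} initial condition as the solution $x$ of the limiting SDE (with $p$ in place of $p^n$); then $m_k^n(t)=\mathrm{Law}(x^n(t))$ and $m_k(t)=\mathrm{Law}(x(t))$. Because $l(x)=a_0x+a_1$ is affine with $a_0\in(\tfrac12,1)$, the difference $\eta^n := x^n - x$ solves the \emph{deterministic} linear ODE $\dot\eta^n = -a_0\,\eta^n + (F_k(p^n(t))-F_k(p(t)))$ with $\eta^n(0)=0$, so that $|x^n(t)-x(t)| = |\eta^n(t)| \le a_0^{-1}\,\|F_k(p^n)-F_k(p)\|_\infty$ for every $t\in[0,T]$. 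Since $p^n\to p$ uniformly and $F_k$ is Lipschitz with some constant $L$, Kantorovich--Rubinstein duality gives $d_1(m_k^n(t),m_k(t)) \le \mathbb E|x^n(t)-x(t)| \le a_0^{-1}L\,\|p^n-p\|_\infty \to 0$ uniformly in $t$, which is the claim, indeed with convergence in $C([0,T],\mathcal P_1)$.

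As a more robust alternative that does not lean on the explicit linear form of the drift, I would argue by compactness. Proposition 2.4 gives $d_1(m_k^n(t),m_k^n(s))\le C\sqrt{|t-s|}$ with $C$ independent of $n$, so $\{m_k^n\}$ is equicontinuous in time; testing \eqref{FP} against $x^2$ and using the dissipativity coming from $-l(x)$ together with the uniform bound $|F_k(p^n(t))|\le \sup_{[0,1]}|F_k|$ yields a Gr\"onwall estimate $\sup_n\sup_{t}\int_{\mathbb R} x^2\,\mathrm dm_k^n(t) \le C(1+\int x^2\,\mathrm d\overline m_k)$, hence uniform tightness and uniform $\mathcal P_1$-integrability. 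Arzel\`a--Ascoli then makes $\{m_k^n\}$ relatively compact in $C([0,T],\mathcal P_1)$, and along any subsequence one passes to the limit in the distributional formulation $\tfrac{\mathrm d}{\mathrm dt}\int\varphi\,\mathrm dm_k^n = \int(D_hH_k\,\varphi' + \varphi'')\,\mathrm dm_k^n$ for $\varphi\in C_c^\infty(\mathbb R)$: weak convergence handles $\int\varphi\,\mathrm dm_k^n$ and $\int\varphi''\,\mathrm dm_k^n$, and for $\int D_hH_k\,\varphi'\,\mathrm dm_k^n$ one uses that $F_k(p^n(t)) - l(x) \to F_k(p(t)) - l(x)$ uniformly on $\operatorname{supp}\varphi'$, uniformly in $t$. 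The limit therefore solves \eqref{FP} with the limiting drift and datum $\overline m_k$, so by the uniqueness in Proposition 2.3 it equals $m_k$; since every subsequence has a further subsequence with the same limit, the whole sequence converges.

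The step I expect to be the real obstacle arises only if one insists on a version of the argument valid when $D_hH_k$ genuinely depends on $h$: there the Fokker--Planck drift $D_hH_k(x,p^n(t),\partial_x u_k^n)$ converges only almost everywhere, because Proposition 2.3 only gives $\partial_x u_k^n\to\partial_x u_k$ a.e., so the passage to the limit in $\int_0^T\!\!\int D_hH_k\,\varphi'\,\mathrm dm_k^n\,\mathrm dt$ requires upgrading a.e. convergence to uniform-on-compacts via Egorov's theorem and then using the uniform second-moment bound to kill the contribution of the tails. In the present model this difficulty evaporates, since the drift reduces to $F_k(p^n(t))-l(x)$ with $F_k(p^n)\to F_k(p)$ uniformly in $t$ by Assumption 2.1(1). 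A minor point worth stating explicitly is the uniform moment bound itself: it is immediate if $\Omega$ is a bounded interval, and it follows from the dissipativity $a_0>\tfrac12$ if $\Omega=\mathbb R$; the remaining steps are then standard parabolic/transport stability.
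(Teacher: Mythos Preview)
Your second approach---compactness via Arzel\`a--Ascoli followed by identification of the limit in the weak formulation---is exactly the paper's route, though you are more explicit than the paper about what ``uniformly bounded'' means in $\mathcal P_1$ (the paper simply asserts it) and about the subsequence argument, which the paper elides. Your first approach, the direct SDE coupling, is genuinely different and more elementary: it exploits the structural remark that $D_hH_k$ does not depend on $h$, so the drift is explicit and affine in $x$, reducing the question to a linear ODE for $x^n-x$ and bypassing compactness entirely. This buys a quantitative convergence rate in $d_1$ that the paper's compactness argument does not give; the paper's argument, on the other hand, is the one that would survive a more general Hamiltonian where $D_hH_k$ genuinely depends on $h$, as you yourself note.

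One correction worth recording: the Fokker--Planck equation $\partial_t m_k - \operatorname{div}_x(m_k D_hH_k) = \partial_{xx}m_k$ corresponds to the SDE with drift $-D_hH_k = l(x) - F_k(p)$, not $F_k(p)-l(x)$ as you wrote. Hence the ODE for $\eta^n$ is $\dot\eta^n = +a_0\,\eta^n + (F_k(p)-F_k(p^n))$, and there is no dissipativity from $-l(x)$. On the finite horizon $[0,T]$ this does not damage either argument: the coupling bound becomes $|\eta^n(t)| \le a_0^{-1}(e^{a_0T}-1)\,L\,\|p^n-p\|_\infty$, and the uniform second-moment bound in the compactness argument follows from Gr\"onwall rather than from dissipativity.
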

\begin{proof}
    For any given $p^n$, according to Proposition 3.3, the corresponding unique solution $\{m_k^n\}$ is equicontinuous. And $\{m_k^n\}$ is also uniformly bounded, therefore, according to the Arzila-Ascoli Theorem, $\{m_k^n\}$ converges to a certain point $m_k$. We then need to prove that $m_k$ is the solution of the equation \eqref{FP} for given $p$. For any test function $\psi(x)$, we have
    $$ \int_0^T\int_{\mathbb{R}} \partial_x \psi D_hH_k(x,p^n,\partial_x u_k^n)m_k^n\text{d}x\text{d}t \to  \int_0^T\int_{\mathbb{R}} \partial_x \psi D_hH_k(x,p,\partial_x u_k)m_k\text{d}x\text{d}t,$$
    since $u_k^n \to u_k$ almost everywhere. 
\end{proof}

Third, For the equation
$$\int_{\mathbb{R}} D_h H_1 m_1 \text{d} x + \int_{\mathbb{R}} D_h H_2 m_2 \text{d}x = -Q(t),$$
if Assumption 2.1 is assumed to hold, there exists a unique initial value $p(0)$ that makes the equation
$$\sum_{i=1,2}\int_{\mathbb{R}} D_h H_i(x, p(0), \partial_x u_i(0))m_i(0,x) \text{d}x = -Q(0)$$
established.

Further, 
\begin{align}
\nonumber \int_{\mathbb{R}} D_h H_1 m_1 \text{d} x + \int_{\mathbb{R}} D_h H_2 \text{d} m_2 & = -Q(t).
\end{align}
Differentiating both sides with $t$, we have
$$ \sum_{i=1,2}\int_{\mathbb{R}} (D_{hx}^2 H_i \dot{x} + D_{hp}^2 H_i \dot{p}) m_i + D_h H_i \partial_t m_i \text{d}x = -\dot{Q}(t).$$
As $\partial_{t} m_i = div_x (m_i D_h H_i) + \partial_{xx} m_i$, we have
$$ \sum_{i=1,2}\int_{\mathbb{R}} (D_{hx}^2 H_i c_i^* + D_{hp}^2 H_i \dot{p}) m_i + D_h H_i [div_x (m_i D_h H_i) + \partial_{xx} m_i] \text{d}x = -\dot{Q}(t),$$
$$D_h H_i[div_x (m_i D_h H_i) + \partial_{xx} m_i] = D_h H_i[\partial_x m_i D_h H_i + m_i D_{hx}^2 H_i + \partial_{xx} m_i].$$
Noted that
$$ (D_h H_i D_h H_i m_i)_x = 2D_{hx}^2 H_i D_h H_i m_i + D_h H_i D_h H_i \partial_x m_i,$$
and
$$ (D_h H_i \partial_x m_i)_x = D_{hx}^2 H_i \partial_x m_i + D_h H_i \partial_{xx} m_i.$$
So
\begin{align}
\nonumber D_h H_i[div_x (m_i D_h H_i) + \partial_{xx} m_i] & = D_h H_i(-D_{hx}^2 H_i m_i + \partial_{xx} m_i) \\
\nonumber & = -D_{hx}^2 H_i (D_h H_i m_i + \partial_x m_i).
\end{align}
Noted that $c_k^* = -D_h H_i,$ so
$$\sum_{i=1,2}\int_{\mathbb{R}} -D_{hx}^2 H_i (2D_h H_i m_i + \partial_x m_i) + D_{hp}^2 H_i \dot{p} m_i \text{d}x = -\dot{Q}(t).$$
Then
$$ \dot{p} = \dfrac{-\dot{Q}(t) + \sum_{i=1,2}\int_{\mathbb{R}}D_{hx}^2 H_i (2D_h H_i m_i + \partial_x m_i)}{\sum_{i=1,2}\int_{\mathbb{R}}D_{hp}^2 H_i m_i \text{d}x}.$$

\begin{equation}\label{resource}
	\left\{
	\begin{array}{lr}
		\dot{\theta} = \dfrac{-\dot{Q}(t) + \sum_{i=1,2}\int_{\mathbb{R}}D_{hx}^2 H_i (2D_h H_i m_i + \partial_x m_i)\text{d}x}{\sum_{i=1,2}\int_{\mathbb{R}}D_{hp}^2 H_i m_i \text{d}x} \\
        \theta_0 = p(0).
		
	\end{array}\right.
\end{equation}

Finally, we prove that $p\to \theta$ is continue. 
\begin{Prop}
    If it is assumed that Assumption 2.1(1)-(2) holds and $p^n$ uniformly converges to $p$, corresponding to $u_k^n$ and $u$ being the solutions of equation \eqref{HJB}, and $m_k^n$ converges to $m$ in \eqref{FP} then $\theta^n$ uniformly converges to $\theta$. 
\end{Prop}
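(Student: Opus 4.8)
The plan is to read \eqref{resource} as a scalar ODE depending on the parameter $(p^n,u_1^n,u_2^n,m_1^n,m_2^n)$ (resp. $(p,u_1,u_2,m_1,m_2)$) and to close the argument with a Gr\"onwall-type continuous-dependence estimate. Write the right-hand side of \eqref{resource} as $\Phi^n(t,\eta)=N^n(t,\eta)/D^n(t,\eta)$, where
\[
N^n(t,\eta)=-\dot Q(t)+\sum_{i=1,2}\int_{\mathbb{R}} D_{hx}^2 H_i\big(x,\eta,\partial_x u_i^n\big)\big(2D_hH_i(x,\eta)\,m_i^n+\partial_x m_i^n\big)\,\mathrm{d}x,
\]
\[
D^n(t,\eta)=\sum_{i=1,2}\int_{\mathbb{R}} D_{hp}^2 H_i\big(x,\eta,\partial_x u_i^n\big)\,m_i^n\,\mathrm{d}x,
\]
and $\Phi,N,D$ are the analogous quantities built from $u_i,m_i$; recall that $\theta^n(0)=p^n(0)$ is the unique root of $\sum_{i}\int_{\mathbb{R}}D_hH_i(x,\cdot,\partial_x u_i^n(0,x))\,\overline m_i(x)\,\mathrm{d}x=-Q(0)$ furnished by Assumption 2.1(4), and likewise for $\theta(0)=p(0)$.

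The first step I would carry out is a uniform lower bound $|D^n(t,\eta)|\ge c_0>0$, valid for all $n$, $t\in[0,T]$, $\eta\in[0,1]$. This is the place where Assumption 2.1(4) is used: differentiating the identity $(1-h)D_hH_i=-H_i$ in $p$ gives $(1-h)D_{hp}^2H_i=-D_pH_i$, and $D_pH_i\le D_{pp}^2H_i\le0$ together with the bound $|\partial_x u_i^n|\le1-\delta$ coming from Proposition 2.1 and Assumption 2.1(3) keeps $D^n$ away from $0$ while $m_i^n$ are probability densities. Next I would show $N^n\to N$ and $D^n\to D$ uniformly on $[0,T]\times[0,1]$: since $H_i\in C^2$, the maps $D_{hx}^2H_i$, $D_hH_i$, $D_{hp}^2H_i$ are bounded and uniformly continuous, $m_i^n\to m_i$ and $\partial_x m_i^n\to\partial_x m_i$ from the parabolic estimates underlying Propositions 2.3 and 2.4, and $\partial_x u_i^n\to\partial_x u_i$ a.e. by Proposition 2.2, so dominated convergence passes the limit through the $x$-integrals. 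Combined with the lower bound on $D^n$, this gives $\varepsilon_n:=\sup_{t,\eta}|\Phi^n(t,\eta)-\Phi(t,\eta)|\to0$; the same bounds make $\eta\mapsto\Phi^n(t,\eta)$ Lipschitz with a constant $L$ independent of $n$ and $t$.

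For the initial data, since $\overline m_i$ is the same fixed datum of \eqref{FP} for every $n$ and $\partial_x u_i^n(0,\cdot)\to\partial_x u_i(0,\cdot)$ a.e., the strict monotonicity in the $p$-slot of $p\mapsto\sum_{i}\int_{\mathbb{R}}D_hH_i(x,p,\partial_x u_i^n(0,x))\,\overline m_i(x)\,\mathrm{d}x$ (again Assumption 2.1(4)) plus dominated convergence yield $\theta^n(0)=p^n(0)\to p(0)=\theta(0)$. Finally, setting $w^n=\theta^n-\theta$ and splitting
\[
\dot w^n=\big[\Phi^n(t,\theta^n)-\Phi^n(t,\theta)\big]+\big[\Phi^n(t,\theta)-\Phi(t,\theta)\big],
\]
the first bracket is bounded by $L|w^n(t)|$ and the second by $\varepsilon_n$, so $|w^n(t)|\le\big(|w^n(0)|+T\varepsilon_n\big)e^{LT}$ by Gr\"onwall's inequality, which tends to $0$ uniformly on $[0,T]$; hence $\theta^n\to\theta$ uniformly.

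The main obstacle I anticipate is legitimizing the passage to the limit inside the $x$-integrals of $N^n$, since Propositions 2.3 and 2.4 only supply Wasserstein (hence weak) convergence of $m_i^n$, whereas $N^n$ contains $\partial_x m_i^n$; one needs the stronger fact that the $m_i^n$ are bounded, uniformly in $t$ and $n$, in a space controlling a spatial derivative (e.g. $C^{1}$ or $H^1$ in $x$), which does follow from De Giorgi--Nash--Moser/Schauder theory for the uniformly parabolic Fokker--Planck equation \eqref{FP} with the bounded Lipschitz drift $D_hH_i(x,p^n)$, but must be invoked carefully, together with an integrable dominating function for the a.e.-convergent sequence $\partial_x u_i^n$, supplied by the uniform Lipschitz bound on the $u_i^n$. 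A secondary delicate point is the strict positivity of $D^n$, for which the precise form of Assumption 2.1(4) and the bound $|\partial_x u_i^n|\le1-\delta$ are essential.
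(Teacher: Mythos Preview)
Your proposal is correct in outline but takes a genuinely different route from the paper. The paper's argument is a two-line compactness argument: since the right-hand side of \eqref{resource} is bounded, $\{\theta^n\}$ is equicontinuous and uniformly bounded, Arzel\`a--Ascoli yields a convergent subsequence, and the limit is identified as the solution of \eqref{resource} for the given $p$ ``by the same discussion as in Proposition 2.4''. You instead run a quantitative continuous-dependence argument: bound the denominator $D^n$ away from zero, show $\Phi^n\to\Phi$ uniformly in $(t,\eta)$, establish a uniform Lipschitz constant in $\eta$, and close with Gr\"onwall.

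What each approach buys: the paper's compactness route is shorter and avoids any explicit stability estimate, but it leaves the identification of the limit (passing to the limit inside the integrals defining $\dot\theta$) largely implicit, and it needs uniqueness of the limit ODE to upgrade subsequential to full convergence. Your Gr\"onwall route gives an explicit rate $\sup_t|\theta^n-\theta|\le(|w^n(0)|+T\varepsilon_n)e^{LT}$ and makes every ingredient visible. Two remarks that simplify your plan considerably in this specific model: first, the paper's Hamiltonian has $D_{hx}^2H_i$ equal to a constant and $D_{hp}^2H_i=F_i'(p)$ independent of $h$, so none of $N^n,D^n$ actually depend on $\partial_x u_i^n$, and your invocation of Assumption~2.1(3) for the bound $|\partial_x u_i^n|\le1-\delta$ is unnecessary here; second, because $D_{hx}^2H_i$ is constant, the troublesome term $\int_{\mathbb{R}}D_{hx}^2H_i\,\partial_x m_i^n\,\mathrm{d}x$ vanishes identically, so the obstacle you flag about needing $H^1$/$C^1$ control of $m_i^n$ does not arise. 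Finally, the initial balance condition determining $\theta^n(0)$ involves only $\overline m_i$ and $Q(0)$ (again since $D_hH_i$ is independent of $h$), so in fact $\theta^n(0)=\theta(0)$ for all $n$ and your convergence argument for the initial data is trivially satisfied.
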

\begin{proof}
    Noted that $\theta(t+a) - \theta(t) = \int_t^{t+a} \dot{\theta}\text{d}t$, and the right side of equation \eqref{resource} is bounded, the $\{\theta^n\}$ is equicontinuous and uniform bounded. So by Ascoli-Arzila Theorem, $\{\theta^n\}$ converges to some point $\theta$. Using the same discussion in Proposition 3.4, we can conclude that $\theta$ is the solution of equation \eqref{resource} for given $p$. And then $\theta^n \to \theta$ uniformly. 
\end{proof}

That shows, the function $p \rightarrow \theta$ continues. As both $p$ and $\theta : [0,T] \to [0,1]$ are bounded and closed, by Schauder fixed-point theorem, there is a fixed point $p$, and $(u_1, u_2, m_1, m_2, p)$ solves equation \eqref{model}. 

\subsection{Uniqueness}
For the uniqueness of equation \eqref{model}, we can see the equation \eqref{model} as a operator $T$ acting on a 5-tuple element $(m_1, m_2, u_1, u_2, p)$. 

We begin with a 5-tuple element $y = (m_1, m_2, u_1, u_2, p)$, in some certain region $D$. 
And 
\begin{align}
    \nonumber  Ty &= 
\begin{pmatrix}
    \partial_{t} u_{1} - H_{1}( x , p , \partial_{x} u_{1} ) + \partial_{xx} u_1 \\
    \partial_{t} u_{2} - H_{2}( x , p , \partial_{x} u_{2} ) + \partial_{xx} u_2 \\
    \partial_{t} m_1 - div_x (m_1 D_h H_1)  - \partial_{xx} m_1 \\
    \partial_{t} m_2 - div_x (m_2 D_h H_2)  - \partial_{xx} m_2 \\
    \int_{\Omega} D_h H_1 \text{d} m_1 + \int_{\Omega} D_h H_2 \text{d} m_2 + Q(t) \\
\end{pmatrix} \\
\nonumber  &=
\begin{pmatrix}
    \partial_{t} u_{1} + \partial_{xx} u_1 \\
    \partial_{t} u_{2} + \partial_{xx} u_2 \\
    \partial_{t} m_1 - \partial_{xx} m_1 \\
    \partial_{t} m_2 - \partial_{xx} m_2 \\
    0 \\
\end{pmatrix}
+
\begin{pmatrix}
    - H_{1}( x , p , \partial_{x} u_{1} ) \\
    - H_{2}( x , p , \partial_{x} u_{2} ) \\
    - div_x (m_1 D_h H_1) \\
    - div_x (m_2 D_h H_2) \\
    \int_{\Omega} D_h H_1 \text{d} m_1 + \int_{\Omega} D_h H_2 \text{d} m_2 + Q(t) \\
\end{pmatrix} \\
\nonumber  &= T_1 y + T_2 y, 
\end{align}
where $T_1$ is linear and $T_2$ is nonlinear.

\begin{Prop}
    If Assumption 2.1(3) holds, then $x\to u_k(x,t)$ is Lipschitz with constant $1-\delta$ for some $0<\delta<1$. 
\end{Prop}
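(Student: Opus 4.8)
The plan is to bypass the PDE \eqref{HJB} and argue instead from the dynamic–programming (stochastic control) representation of $u_k$ that underlies Proposition 2.1, using a pathwise coupling of the controlled state processes. Fix $p$, $t$ and $x,y\in\mathbb{R}$. Let $\alpha^\ast$ be an optimal (or, if need be, $\varepsilon$-optimal) control for the datum $(x,t)$, let $X^x$ be the trajectory it generates, $dX^x_s=c_k(\alpha^\ast,p,X^x_s,s)\,ds+\sqrt2\,dW_s$ with $X^x_t=x$, and let $X^y$ be the trajectory obtained by feeding the \emph{same} control $\alpha^\ast$ and the \emph{same} Brownian path into the dynamics started from $y$; this is legitimate since the constraint $\alpha\ge 0$ and the control set do not involve the state. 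Because $\alpha^\ast$ is optimal from $(x,t)$ and merely admissible from $(y,t)$, and the $\alpha^\ast$-dependent and noise contributions to the cost are identical for the two runs, subtracting the two cost functionals gives
\[
u_k(y,t)-u_k(x,t)\ \le\ \mathbb{E}\Big[\int_t^T\big(l(X^y_s)-l(X^x_s)\big)\,ds+\overline{u}_k(X^y_T)-\overline{u}_k(X^x_T)\Big].
\]

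The heart of the argument is that the two trajectories contract. Put $\Delta_s:=X^y_s-X^x_s$; the stochastic integrals cancel and, since $c_0$ and $f_k$ do not depend on the state, $\dot\Delta_s=-\big(l(X^y_s)-l(X^x_s)\big)=-a_0\Delta_s$ by Assumption 2.1(1), so $\Delta_s=(y-x)e^{-a_0(s-t)}$ is deterministic. Hence $|l(X^y_s)-l(X^x_s)|=a_0e^{-a_0(s-t)}|y-x|$ and $|X^y_T-X^x_T|=e^{-a_0(T-t)}|y-x|$, and using the Lipschitz bound $1-\delta$ for $\overline{u}_k$ from Assumption 2.1(3),
\[
u_k(y,t)-u_k(x,t)\ \le\ \Big(\int_t^T a_0e^{-a_0(s-t)}\,ds\Big)|y-x|+(1-\delta)e^{-a_0(T-t)}|y-x|=\big(1-\delta e^{-a_0(T-t)}\big)|y-x|.
\]
Interchanging $x$ and $y$ gives the matching bound, so $x\mapsto u_k(x,t)$ is Lipschitz with constant $1-\delta e^{-a_0(T-t)}\le 1-\delta e^{-a_0 T}$, uniformly in $t\in[0,T]$ and in $p$. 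Writing $\delta':=\delta e^{-a_0 T}\in(0,1)$, this is precisely ``Lipschitz with constant $1-\delta'$'', which is the assertion after renaming the constant.

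The step I expect to be the genuine obstacle is the contraction in the second paragraph: the entire improvement over the trivial estimate comes from $\Delta_s=(y-x)e^{-a_0(s-t)}$, which uses in an essential way that $l$ is monotone increasing (here affine with slope $a_0>0$) — the modelling hypothesis that resources decay no faster than they grow. Without it one only obtains the constant $(T-t)a_0+(1-\delta)$, which exceeds $1$ for large $T$, so the ``frozen state'' computation in the proof of Proposition 2.1 does not by itself yield the present proposition; keeping track of the moving state is unavoidable. The remaining points — legitimacy of the coupling, cancellation of the noise, replacing an optimal control by a near-optimal one — are routine. Alternatively one could differentiate \eqref{HJB} in $x$ to get a linear parabolic equation for $v=\partial_x u_k$ whose zero-order coefficient is $l'(x)=a_0$ and compare $v$ with the constants $\pm(1-\delta)$ via the parabolic maximum principle; but this needs more regularity of $u_k$ than a viscosity solution gives a priori, so I would either mollify or fall back on the representation above, which is why I take the probabilistic route as the main one.
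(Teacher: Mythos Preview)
Your argument is correct and in fact more careful than the paper's. The paper proceeds exactly as in its proof of Proposition~2.1: it writes
\[
u_k(x,t)=\int_t^T\big(c_0(\alpha^\ast,s)-f_k(p,\alpha^\ast)+l(x)\big)\,ds+\overline{u}_k(x),
\]
freezing the initial point $x$ inside the integrand and in the terminal cost, compares with the analogous expression at $x+h$, and invokes the $(1-\delta)$-Lipschitz bounds on $l$ and $\overline{u}_k$ to conclude $u_k(x+h,t)-u_k(x,t)\le (1-\delta)h$. You instead run the controlled process from both $x$ and $y$ with the same control and Brownian path, observe that the increment $\Delta_s$ satisfies $\dot\Delta_s=-a_0\Delta_s$ because the only state dependence of the drift comes through $-l$, and exploit this exponential contraction to obtain the constant $1-\delta e^{-a_0(T-t)}$.

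What each buys: the paper's route is short and mirrors its semiconcavity proof, but as you explicitly note, a frozen-state computation with $l(x)$ rather than $l(X_s)$ naively produces $(T-t)(1-\delta)+(1-\delta)$, which is not below $1$; the paper's write-up glosses over this. Your route pays the price of tracking the moving state, but in exchange it shows transparently \emph{why} one still gets a constant strictly below $1$ --- namely the monotone drift from $-l$ makes the two trajectories contract --- and produces the explicit, $p$-independent constant $1-\delta e^{-a_0T}$. Your renaming $\delta':=\delta e^{-a_0T}$ is harmless since the statement only asks for ``some $0<\delta<1$'', and your final remark that a PDE proof via the maximum principle for $v=\partial_x u_k$ would require more regularity than a viscosity solution offers is well taken.
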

\begin{proof}
Expanding $u_k(x,t)$, we have 
\begin{align}
\nonumber    u_k(x,t) & = \min\limits_\alpha \mathbb{E} J_k (\alpha, p, x, t) \\
   & = \min\limits_\alpha [\int_t^T c_0 (\alpha. t) - f_k (p, \alpha) + l(x) \text{d}s + \overline{u_k}(x)]  \\
\nonumber    & = \int_t^T c_0 (\alpha^*. t) - f_k (p, \alpha^*) + l(x) \text{d}s + \overline{u_k}(x),
\end{align}
where $\alpha^*$ is the optimal control in $(x,t)$.
For any $h > 0$, we have 
$$u_k(x+h,t) \leq \int_t^T c_0 (\alpha^*. t) - f_k (p, \alpha^*) + l(x+h) \text{d}s + \overline{u_k}(x+h).$$
As Assumption 2.3, both $\overline{u}_k (x)$ and $ l(x) $ is Lipschitz. Then 
$$ \overline{u_k}(x + h) -\overline{u_k}(x) \leq (1-\delta)h, $$
$$ l(x + h)-l(x) \leq (1-\delta)h, $$
$$ u_k(x + h)-u_k(x) \leq (1-\delta)h. $$
So $u_k$ is Lipschitz with constant $1-\delta$. 
\end{proof}

Proposition 3.6 proves that if Assumption 2.1(3) holds, then we have $\partial_x u_i(x,t)\leq 1-\delta$. 

For any $y^1, y^2 \in D$, $y^1 = (m_1, m_2, u_1, u_2, p)$, $y^2 = (m_1', m_2', u_1', u_2', p')$, calculate that $(Ty^1-Ty^2, y^1-y^2)$ with inner product 
$$((a_1,b_1,c_1,d_1,f_1), (a_2,b_2,c_2,d_2,f_2)) = \int_{[0,T]\times \mathbb{R}} a_1a_2 + b_1b_2 +c_1c_2 +d_1d_2 +f_1f_2 \text{d}x\text{d}t,$$ 
we have
\begin{align}
\nonumber    (T_1y^1-T_1y^2, y^1-y^2) =&\sum_{i=1,2}\int_{[0,T]\times \mathbb{R}}(m_i-m_i')[\partial_{t}(u_i-u_i') + \partial_{xx} (u_i-u_i')] \\
\nonumber    &+(u_i-u_i')[\partial_{t} (m_i-m_i') - \partial_{xx} (m_i-m_i')]\text{d}x\text{d}t \\
             =&\sum_{i=1,2}\int_{[0,T]\times \mathbb{R}}(m_i-m_i')\partial_{t}(u_i-u_i') + (u_i-u_i')\partial_{t} (m_i-m_i') \\
\nonumber    &+(m_i-m_i')\partial_{xx} (u_i-u_i') - (u_i-u_i')\partial_{xx} (m_i-m_i')\text{d}x\text{d}t \\
\nonumber    =&0.
\end{align}

Since $u_i-u_i'$ and $m_i-m_i'$ is $0$ respectively at time $0$ and $T$. Then, for the nonlinear operator $T_2$, we have

\begin{align}
\nonumber    &(T_2y^1-T_2y^2, y^1-y^2) \\
\nonumber  =&-\sum_{i=1,2}\int_{[0,T]\times \mathbb{R}}\big(H_{i}( x , p , \partial_{x} u_{i} )-H_{i}( x , p' , \partial_{x} u_{i}' )\big)(m_i-m_i') \\
\nonumber   &-div_x \big[m_i D_h H_i(x,p,\partial_{x}u_{i}) - m_i' D_h H_i(x,p',\partial_{x}u_{i}')\big](u_i-u_i')\\
\nonumber   &+ \big[D_h H_i(x,p,\partial_{x}u_{i}) m_i- D_h H_i(x,p',\partial_{x}u_{i}') m_i\big](p-p')\text{d}x\text{d}t \\
\nonumber  =&-\sum_{i=1,2}\int_{[0,T]\times \mathbb{R}}\big(H_{i}( x , p , \partial_{x} u_{i} )-H_{i}( x , p' , \partial_{x} u_{i}' )\big)(m_i-m_i') \\
\nonumber  &-\big[m_i D_h H_i(x,p,\partial_{x}u_{i}) - m_i' D_h H_i(x,p',\partial_{x}u_{i}')\big](\partial_xu_i-\partial_xu_i')\\
\nonumber   &+ \big[D_h H_i(x,p,\partial_{x}u_{i}) m_i- D_h H_i(x,p',\partial_{x}u_{i}') m_i'\big](p-p')\text{d}x\text{d}t \\
\nonumber  =&\sum_{i=1,2}\Big\{\int_{[0,T]\times \mathbb{R}}m_i\big[H_i(x,p',\partial_{x}u_{i}')-H_i(x,p,\partial_{x}u_{i}) \\
\nonumber  &- (\partial_xu_i'-\partial_xu_i + p'-p)D_h H_i(x,p,\partial_{x}u_{i})\big] \\
\nonumber  &+m_i'\big[H_i(x,p,\partial_{x}u_{i})-H_i(x,p',\partial_{x}u_{i}') \\
\nonumber  &- (\partial_xu_i-\partial_xu_i' + p-p')D_h H_i(x,p',\partial_{x}u_{i}')\big]\text{d}x\text{d}t\Big\}.
\end{align}

Note that 
$$H_i(x,p,\partial_{x}u_{i}')-H_i(x,p,\partial_{x}u_{i}) =(\partial_xu_i'-\partial_xu_i)D_hH_i(x,p,\partial_{x}u_{i}),$$
and
\begin{align}
\nonumber H_i(x,p',\partial_{x}u_{i}')-H_i(x,p,\partial_{x}u_{i}') = &(p'-p)D_pH_i(x,p,\partial_{x}u_{i}') \\
\nonumber &+ (p'-p)^2 D_{pp}^2 H_i(x,p_\epsilon,\partial_{x}u_{i}'),
\end{align}
where $p_{\epsilon}(t)$ take some value between $p(t)$ and $p'(t)$ for every $t \in [0,T]$. 
So 
\begin{align}
\nonumber  &\sum_{i=1,2}\Big\{\int_{[0,T]\times \mathbb{R}}m_i\big[H_i(x,p',\partial_{x}u_{i}')-H_i(x,p,\partial_{x}u_{i}) \\
\nonumber  &- (\partial_xu_i'-\partial_xu_i + p'-p)D_h H_i(x,p,\partial_{x}u_{i})\big] \\
\nonumber  &+m_i'\big[H_i(x,p,\partial_{x}u_{i})-H_i(x,p',\partial_{x}u_{i}') \\
\nonumber  &- (\partial_xu_i-\partial_xu_i' + p-p')D_h H_i(x,p',\partial_{x}u_{i}')\big]\text{d}x\text{d}t\Big\} \\
\nonumber  =&\sum_{i=1,2}\Big\{\int_{[0,T]\times \mathbb{R}}m_i\big[(p'-p)D_pH_i(x,p,\partial_{x}u_{i}') + (p'-p)^2 D_{pp}^2 H_i(x,p_\epsilon,\partial_{x}u_{i}') \\
\nonumber  &-(p'-p)D_h H_i(x,p,\partial_{x}u_{i})\big] \\
\nonumber  &+m_i'\big[(p-p')D_pH_i(x,p',\partial_{x}u_{i}) + (p-p')^2 D_{pp}^2 H_i(x,p_\epsilon,\partial_{x}u_{i}) \\
\nonumber  &-(p-p')D_h H_i(x,p',\partial_{x}u_{i}')\big]\text{d}x\text{d}t\Big\}. \\
\nonumber =&\sum_{i=1,2}\Big\{\int_{[0,T]}\int_0^\infty (p'-p)\big[m_i D_p H_i(x,p,\partial_x u_i') - m_i' D_p H_i(x,p',\partial_x u_i)\big] \\
\nonumber &+(p'-p)\big[m_i' D_h H_i(x,p',\partial_x u_i') - m_i D_h H_i(x,p,\partial_x u_i)\big] \\
\nonumber &+(p'-p)^2\big[m_i D_{pp}^2 H_i(x,p_\epsilon,\partial_{x}u_{i}') + m_i' D_{pp}^2 H_i(x,p_\epsilon,\partial_{x}u_{i})\big] \text{d}x\text{d}t\Big\}.
\end{align}

By the above formula, we find that if $p' = p$, then $(Ty^1-Ty^2, y^1-y^2) = 0$.Which means, if $p$ is unique, then the solution of model \eqref{model} is unique. Thus, we only need proposition below to prove the uniqueness of solution. 
\begin{Prop} The ordinary differential equation \eqref{resource}:
\begin{equation}
\nonumber	\left\{
	\begin{array}{lr}
		\dot{p} = \dfrac{-\dot{Q}(t) + \sum_{i=1,2}\int_{\mathbb{R}}D_{hx}^2 H_i (2D_h H_i m_i + \partial_x m_i)}{\sum_{i=1,2}\int_{\mathbb{R}}D_{hp}^2 H_i m_i \text{d}x} \\
        p_0 = p(0).
		
	\end{array}\right.
\end{equation}
has unique solution in $t\in[0,T]$. 
\end{Prop}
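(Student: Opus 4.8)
The plan is to regard \eqref{resource} as a scalar Cauchy problem $\dot p = G(t,p)$ on $[0,T]$ with $p(0)=p_0$, where
$$G(t,p)=\frac{-\dot Q(t)+\sum_{i=1,2}\int_{\mathbb{R}}D_{hx}^2H_i\big(2D_hH_i(x,p,\partial_xu_i)\,m_i(t,x)+\partial_xm_i(t,x)\big)\,\mathrm{d}x}{\sum_{i=1,2}\int_{\mathbb{R}}D_{hp}^2H_i(x,p,\partial_xu_i)\,m_i(t,x)\,\mathrm{d}x},$$
and to verify the hypotheses of the Picard--Lindel\"of theorem: continuity of $G$ in $t$, Lipschitz dependence of $G$ on $p$ uniformly in $t$, and an a priori bound ruling out finite-time blow-up. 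Here $u_i$ and $m_i$ are the (fixed) solutions of \eqref{HJB} and \eqref{FP} attached to the solution branch, so the only genuine $p$-dependence left in $G$ is the explicit one through $H_i$ and its derivatives.

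First I would collect the regularity of the coefficients. Proposition 3.3 gives $t\mapsto m_i(t,\cdot)$ continuous (indeed $\tfrac{1}{2}$-H\"older) for $d_1$; parabolic smoothing in \eqref{FP} together with $\overline m_i\in C^1$ yields $m_i(t,\cdot)\in C^1(\mathbb{R})$ with $\|\partial_xm_i(t,\cdot)\|_\infty$ and $\int_{\mathbb{R}}|x|\,m_i(t,x)\,\mathrm{d}x$ bounded uniformly on $[0,T]$ (the moment bound since $D_hH_i$ grows at most linearly in $x$ and the horizon is finite). Since $D_{hx}^2H_i$ is a positive constant and $D_hH_i(x,p,\cdot)=F_i(p)-l(x)$ is affine in $x$, the numerator is finite and continuous in $t$; note also $\int_{\mathbb{R}}D_{hx}^2H_i\,\partial_xm_i\,\mathrm{d}x=0$ because $m_i$ integrates to $1$, so only the first moment of $m_i$ enters. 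Together with $Q\in C^1$ this gives continuity of $t\mapsto G(t,p)$.

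The crux is the denominator $N(t,p):=\sum_{i=1,2}\int_{\mathbb{R}}D_{hp}^2H_i(x,p,\partial_xu_i)\,m_i(t,x)\,\mathrm{d}x$, which I must show is bounded away from zero. By Proposition 3.6, $\partial_xu_i\le 1-\delta$, so $1-\partial_xu_i\ge\delta>0$; feeding this and the structural identities following Assumption 2.1 (in particular $(1-h)D_hH_i=-H_i$ and $D_hH_i$ independent of $h$) into Assumption 2.1(4) (i.e.\ $D_{pp}^2H_i\le0$, $D_pH_i\le D_{pp}^2H_i$, $D_pH_i\le2H_i$) should force a uniform sign and a uniform lower bound $D_{hp}^2H_i\ge c_0>0$; since each $m_i(t,\cdot)$ is a probability density, this yields $N(t,p)\ge c_0$ on $[0,T]\times[0,1]$, so $G$ is well defined and never degenerates. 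I expect this to be the main obstacle: it is the one place where Assumption 2.1(4) is genuinely used, and a wrong sign or a degeneracy $D_{hp}^2H_i\to0$ would break well-posedness, so one may even have to strengthen Assumption 2.1(4) slightly to get the strict bound directly.

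Finally the Lipschitz estimate: $p\mapsto H_i$, $p\mapsto D_hH_i$ and $p\mapsto D_{hp}^2H_i$ are Lipschitz (Assumption 2.1(1) and the Remark), $D_{hx}^2H_i$ is constant, and $m_i,\partial_xm_i$ are bounded in $L^1\cap L^\infty$ uniformly in $t$; hence numerator and denominator of $G$ are Lipschitz in $p$ uniformly in $t$, and with $N\ge c_0>0$ so is the quotient. Picard--Lindel\"of then produces a unique maximal solution of $\dot p=G(t,p)$, $p(0)=p_0$, and since $p$ is a probability it stays in $[0,1]$ (equivalently $|G|$ is bounded on $[0,T]\times[0,1]$ by the estimates above), so the solution extends to all of $[0,T]$. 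This gives uniqueness of $p$, which together with the monotonicity computation preceding the statement yields uniqueness of the quintuple $(u_1,u_2,m_1,m_2,p)$.
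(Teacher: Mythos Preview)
Your plan is essentially the paper's: show the right-hand side is Lipschitz in $p$ and invoke a standard ODE uniqueness theorem (Picard--Lindel\"of for you; Theorems 5.2.1--5.2.2 of \cite{NFA} in the paper). You are in fact more careful than the paper in two respects: you explicitly argue why the denominator $\sum_i\int_{\mathbb R} D_{hp}^2H_i\,m_i\,\mathrm{d}x$ stays bounded away from zero (via $1-\partial_xu_i\ge\delta$ from Proposition~3.6 together with Assumption~2.1(4)), whereas the paper just writes $\sum_i D_{hp}^2H_i(p^*)$ in the denominator, uses $D^3_{hpp}H_i\ge0$ to pick $p^*=\min(p,p')$, and does not comment on strict positivity; and you check continuity in $t$ and a global bound for extension to $[0,T]$, which the paper leaves implicit. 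Your observation that $\int_{\mathbb R}D_{hx}^2H_i\,\partial_xm_i=0$ because $D_{hx}^2H_i$ is constant is exactly the cancellation the paper exploits.

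There is one genuine difference in scope that matters for how the proposition is used. You freeze $(u_i,m_i)$ at a single solution branch and estimate only the \emph{explicit} $p$-dependence of $G$. That proves the literal statement---that the ODE \eqref{resource} with given data has a unique solution---but it does not by itself yield $p=p'$ for two solutions of the full system \eqref{model}, since two putative quintuples give rise to two \emph{different} ODEs (with $m_i\neq m_i'$), not two solutions of the same ODE. The paper's computation instead compares $R(p',t)$ built from $m_i'$ against $R(p,t)$ built from $m_i$; using $D_{hx}^2H_i=a_0$ constant and $D_hH_i=F_i(p)-l(x)$, the $\partial_xm$-integrals vanish and the difference collapses to $2a_0\,|F_i(p')-F_i(p)|\le 2a_0C\,|p'-p|$ over the common denominator, after which the cited ODE theorem gives $p=p'$ on $[0,T]$. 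If you want your argument to feed into the uniqueness of the quintuple (as in your last sentence), you should upgrade the Lipschitz estimate to this two-branch comparison rather than freezing the data.
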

\begin{proof}
    Let 
    $$R(p,t) = \dfrac{-\dot{Q}(t) + \sum_{i=1,2}\int_{\mathbb{R}}D_{hx}^2 H_i (2D_h H_i m_i + \partial_x m_i)\text{d}x}{\sum_{i=1,2}\int_{\mathbb{R}}D_{hp}^2 H_i m_i \text{d}x}, $$ 
    and let $R: J \times S \rightarrow E$ with $J = [0, T], S = \{p\in E:||p-p_0||\leq 1\}$
    For two solution of \eqref{resource} $p$ and $p'$, calculate $||R(p',t) - R(p,t)||$. Noted that $D_{hp}^2 H_i$ is not related to $x$, and $D_{hpp}^3 H_i\geq 0$, let $p^*(t) = \min[p(t), p'(t)]$, for any $t\in [0,T]$. 
    
    Using Assumption 2.1(1) and (4) of $H$, we have $p \rightarrow R(p, t)$ is increase. And reminding that $D_{hx}^2 H_i$ is a positive constant $\frac{1}{2} < a_0 < 1$, Thus 
    \begin{align}
    \nonumber    &\|R(p',t) - R(p,t)\| \\
    \nonumber    \leq &\|\dfrac{\sum_{i=1,2}\int_{\mathbb{R}}D_{hx}^2 H_i (2D_h H_i(x,p') m_i' - 2D_h H_i(x,p) m_i + \partial_x m_i' - \partial_x m_i)\text{d}x}{\sum_{i=1,2} D_{hp}^2 H_i(p^*)}\| \\
    \nonumber    = &\|\dfrac{\sum_{i=1,2}\{[2D_{hx}^2 H_i(F_i(p') - F_i(p))] + \int_{\mathbb{R}}D_{hx}^2 H_i (\partial_x m_i' - \partial_x m_i)\text{d}x\}}{\sum_{i=1,2} D_{hp}^2 H_i(p^*)}\| \\
    \nonumber    \leq &\Big|\dfrac{\sum_{i=1,2}2D_{hx}^2 H_i}{\sum_{i=1,2} D_{hp}^2 H_i(p^*)}C\Big| \|p'(t) - p(t)\|.
    \end{align}
    By [\cite{NFA},Theorem 5.2.1] with constant $K = \Big|\dfrac{\sum_{i=1,2}[2D_{hx}^2 H_i]}{\sum_{i=1,2} D_{hp}^2 H_i(p^*)}C\Big|$, the equation \eqref{resource} has unique solution $p$ in $t\in [\frac{T}{2}-\delta, \frac{T}{2}+\delta]$, where $0<\delta<\min\{\frac{T}{2}, \frac{1}{M}, \frac{1}{K}\}$ and $M = \sup \limits_{(p,t)} ||R(p, t)||$. Since $p \rightarrow R(p, t)$ is local Lipschitz, there is a unique solution of \eqref{resource} in $t\in [0, T]$ by [\cite{NFA}, Theorem 5.2.2]. 
\end{proof}

The proposition above tends to $p=p'$. For the uniqueness of the solution of the HJB equation and the Fokker-Planck equation, we have $u_1=u_1'$, $u_2=u_2'$, $m_1=m_1'$, $m_2=m_2'$. Therefore, the solution of equation \eqref{model} is unique. 

\section{Example}
In this section, we focus on a specific function $f_k(p,\alpha)=b_kp\alpha$, $c_0(\alpha, t)=a\alpha^2$ and $l(x)=cx$, $k=1,2$, where $a$ and $b$ are constant satisfying Assumptions 2.1-2.4. And study the related equation with discussion similar to the previous sections. For the Hamiltonian, 
\begin{align}
\nonumber    H_k &= -\sup\limits_\alpha [ (1 - \partial_x u_k) (-a\alpha^2 + b_kp\alpha - cx)] \\
\nonumber    &=-(1 - \partial_x u_k) (\frac{b_k^2p^2}{4a} - cx).
\end{align}
Since
$$D_h H_k = \frac{b_k^2p^2}{4a} - cx,$$ 
we suppose that $0\leq p_1 \leq p_2 \leq 1$, then 
$$ |D_h H_k(p_2)-D_h H_k(p_1)| = |(p_2-p_1)b_k^2\frac{p_1+p_2}{4a}|\leq \big|\frac{b_k^2}{2a}\big||p_2-p_1|.$$ 
Then $D_h H_k$ is Lipschitz with constant $C=|\dfrac{b_k^2}{2a}|$. $D_{pp}^2 H_k \leq 0$, $D_p H_k(0,p,h) \leq D_{pp}^2 H_k(0,p,h)$ and $D_p H_k \leq 2H_k$ are easy to confirm. And $l(x)$ is increase linear function of $x$. Fix $\overline{u}_k (x)$ such that $\overline{u}_k (x)$ is semiconcave, and Lipschitz with $1-\delta$, then all the Assumption 2.1-2.4 are satisfied. Then there is a unique solution of model \eqref{model}.

In the two figures below, we take initial $p(0)$ from $0$ to $1$ in different interval $0.1$ and $0.01$. We can find that in the end of the model \ref{model}, the gene expression $p$ at time $T$ are all around $0.25$. 
In this numerical simulation, we also assume many initial conditions, such as the initial condition $m_k(x,0)$ is a Gaussian distribution (boundary condition), etc. In general, to some extent, the numerical simulation shows that this model has certain stability and reference value.

We set the initial density distribution as normal, with a = 1.0, b1 = 1.0, b2 = 1.2, c = 0.5, using the above example, we can get the following two numerical simulation pictures: 

\begin{figure}[htbp]
    \centering
    \includegraphics[width=0.6\linewidth]{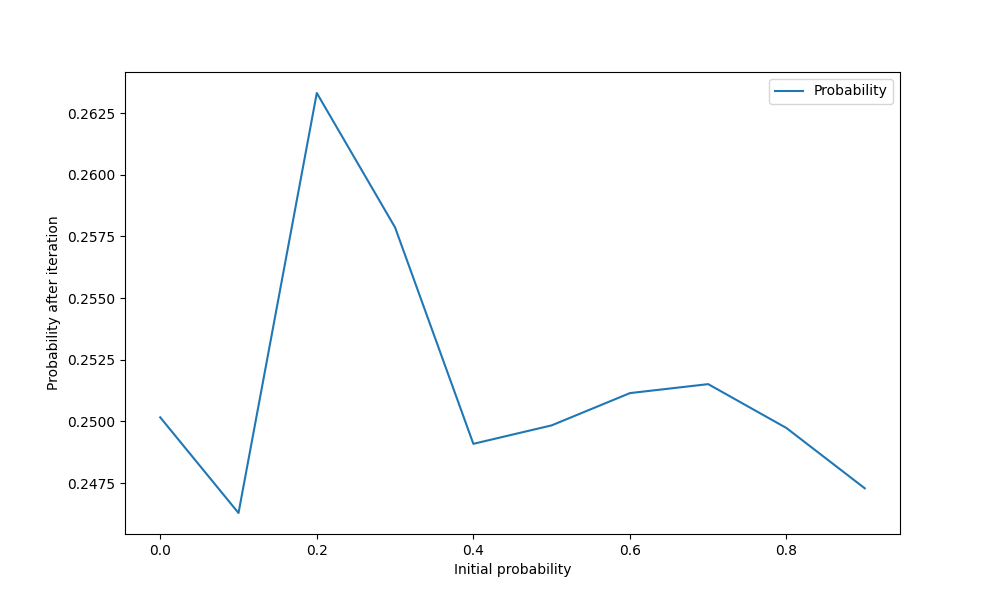}
    \caption{$p(0)-p(T)$ with interval 0.1}
\end{figure}
\begin{figure}
    \centering
    \includegraphics[width=0.6\linewidth]{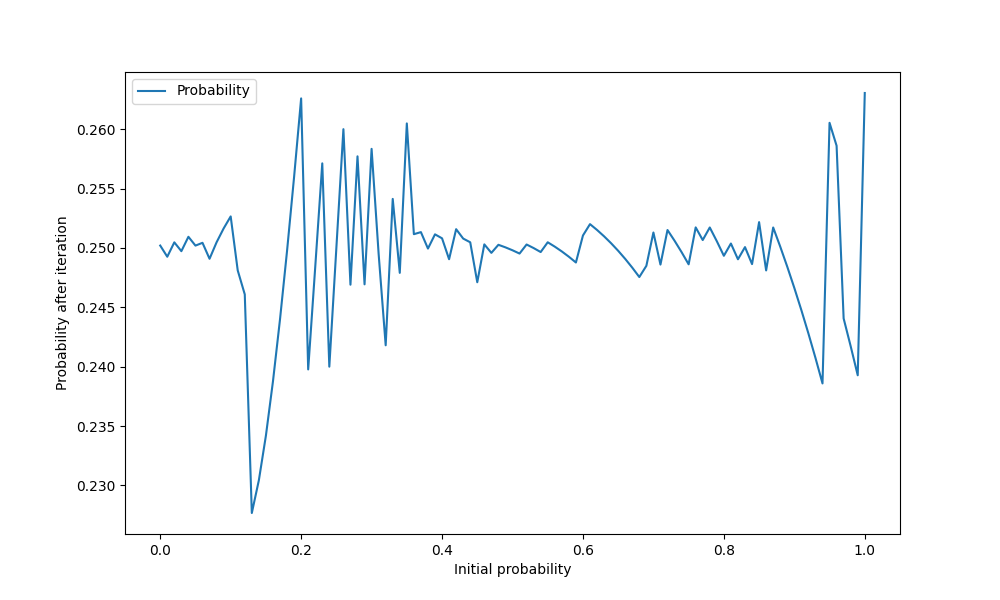}
    \caption{$p(0)-p(T)$ with interval 0.01}
    \label{fig:enter-label}
\end{figure}

\section{Conclusions}
We used multiple population MFG theories to describe the competitive relationship between beetles of different sizes in model as follows: 
\begin{equation}\label{model}
	\left\{
	\begin{array}{lr}
		-\partial_{t} u_{k} + H_{k}( x , p , \partial_{x} u_{k} ) = \partial_{xx} u_k \\
		\partial_{t} m_k - div_x (m_k D_h H_k)  = \partial_{xx} m_k \\
        \int_{\Omega} D_h H_1 \text{d} m_1 + \int_{\Omega} D_h H_2 \text{d} m_2 = -Q(t) \\
        u_k (T,x) = \overline{u}_k (x), \quad m_k (0,x) = \overline{m}_k (x), 
		
	\end{array}\right.
\end{equation} for $k=1,2$. 
And proved the existence and uniqueness of the expression probability of genes controlling beetle size in the population competition. This illustrates that under established competition rules and with continuous energy growth, the ratio of large beetles to small beetles within the population can converge to a consistent level. In other words, such a population will not be invaded by other populations with different size ratios of beetles, because over time, their ratio always stabilizes at a unique fixed point. Similar conclusions can be directly extended to other populations, or even two entirely distinct populations, provided that two conditions are met: they primarily compete with each other within a certain region, and they operate under established competition rules with an external energy growth function.

Broadly speaking, if a gene controls a certain characteristic of a certain organism and affects the competitive pressure function, then under certain assumptions, we can still obtain the unique fixed point of the ratio of big and small beetles, which means the probability of gene expression can tend towards a stable value. 

\textbf{Conflict of Interest}
The authors declare that they have no conflict of interest.


\addcontentsline{toc}{section}{References} 
\end{document}